\newtheorem{theorem}{Theorem}
\newtheorem{example}{Example}
\newtheorem{proposition}{Proposition}
\newtheorem{corollary}{Corollary}
\newtheorem{remark}{Remark}
\newtheorem{definition}{Definition}
\theoremstyle{remark}
\title{Colored compositions, Invert operator and elegant compositions with the "black tie"}
\author{Marco Abrate, Stefano Barbero, Umberto Cerruti, Nadir Murru}
\date{}
\begin{document}
\maketitle

\begin{abstract}
This paper shows how the study of colored compositions of integers reveals some unexpected and original connection with the Invert operator. The Invert operator becomes an important tool to solve the problem of directly counting the number of colored compositions for any coloration. The interesting consequences arising from this relationship also give an immediate and simple criterion to determine whether a sequence of integers counts the number of some colored compositions. Applications to Catalan and Fibonacci numbers naturally emerge, allowing to clearly answer to some open questions. Moreover, the  definition of colored compositions with the "black tie" provides straightforward combinatorial proofs to a new identity involving multinomial coefficients and to a new closed formula for the Invert operator. Finally, colored compositions with the ``black tie" give rise to a new combinatorial interpretation for the convolution operator, and to a new and easy method to count the number of parts of colored compositions.
\end{abstract}

\section{Colored compositions and the Invert operator}

The compositions of integer numbers correspond to ordered partitions \cite{Mac} in the following sense: any composition of an integer $n$ is a sequence of integers (called \emph{parts}) whose sum is $n$, univocally determined by the order of its parts. In \cite{Agra1} and \cite{Agra2}, $n$--colour partitions and $n$--colour compositions have been introduced, respectively. In a $n$--colour composition a part of size $m$ occurs with $m$ different colors, i.e., we have $m$ different parts of size $m$. Recently, colored compositions have been studied in different works, see, e.g., \cite{Agra3}, \cite{Drake}, \cite{Agra4}, \cite{Guo}.\\
\indent In the following, we extend the study to general \emph{colored compositions} where each part can occur with any number of colors (i.e., a part of size $m$ can occur with $j$ different colors, where $j$ can be any integer number). We give combinatorial interpretations of colored compositions from a different point of view by using the Invert operator, the ordinary complete Bell polynomials and linear recurrence sequences.\\
\indent Let us fix some definitions.
\begin{definition}
We define the \emph{coloration} $X$ to be the sequence $X=(x_i)_{i=1}^{\infty}$ of non--negative integers, where any $x_i$ is the number of colors of the part $i$. If $x_i=0$, it means that we do not use the integer $i$ in the compositions. Moreover, we define
\begin{itemize}
\item $L(X)$ as the set of all colored compositions with coloration $X$; we call an element of this set a \emph{composition} (for shortness) and indicate it with bold letters, e.g., $\textbf{b}\in L(X)$
\item $L_n(X)$ as the set of colored compositions of $n$ with coloration $X$
\item $A_n(X) = \lvert L_n(X) \rvert$; $A=A(X)=(A_n(X))_{n=1}^\infty$ is the sequence of the number of colored compositions of $n$ with coloration $X$, for $n=1,2,\ldots$
\item $p(\textbf{b})$ as the number of the parts of the composition \textbf{b}
\item $P_n(X) = \sum_{\textbf{b}\in L_n(X)} p(\textbf{b})$
\item $r(\textbf{b}) = p(\textbf{b})+1$ as the number of break--points
\item $R_n(X) = \sum_{\textbf{b}\in L_n(X)} r(\textbf{b})$  
\end{itemize}
\end{definition}
\begin{remark}
Let us observe that $A_n(X)$ can be viewed as a polynomial in $x_1,\ldots ,x_n$ (since parts of size greater than $n$ can not be used in the composition of $n$). Thus, sometimes we will write $A_n(x_1, \ldots, x_n)$ instead of $A_n(X)$. Moreover, we set $A_0(X)=1$, for any coloration $X$, meaning that we can compose the number 0 only if we do not use any composition. 
\end{remark}
\begin{definition}
The Invert operator $I$ transforms a sequence $a=(a_n)_{n=0}^\infty$ into a sequence $b=(b_n)_{n=0}^\infty$ as follows:
\begin{equation*} 
I(a)=b, \ \ \ \sum_{n=0}^{\infty}b_nt^n=\cfrac{\sum_{n=0}^{\infty}a_nt^n}{1-t\sum_{n=0}^{\infty}a_nt^n}. 
\end{equation*}
Sometimes, we will use the notation $(I(a))_n$ for the $n$--th term of the transformed sequence $b$.
\end{definition}
\begin{definition}\label{invert}
The \emph{Invert interpolated operator} $I^{(x)}$, with parameter $x \in \mathbb R$, transforms any sequence $a$, having ordinary generating function $a(t)$, into a sequence $b=I^{(x)}(a)$ having ordinary generating function
\begin{equation*}
b(t)=\cfrac{a(t)}{1-xta(t)}\quad.  
\end{equation*}
\end{definition}
\begin{definition}
The complete ordinary Bell polynomials are defined by (see, e.g., \cite{Port})
$$ B_n(t)=\sum_{k=1}^nB_{n,k}(t), $$
where $t=(t_1,t_2,\ldots)$ and the partial ordinary Bell polynomials $B_{n,k}(t)$ satisfy
$$ \left(\sum_{n\geq1}t_nz^n\right)^k=\sum_{n \geq k}B_{n,k}(t)z^n, $$
and
$$ 
B_{n,k} \left( t \right) = \sum_{\substack {i_1  + 2i_2  +  \cdots  + ni_n = n \\ i_1  + i_2  +  \cdots  + i_n  = k}} \frac{{k!}}{{i_1 !i_2 ! \cdots i_n !}}t_1^{i_1 } t_2^{i_2 }  \cdots t_n^{i_n }\quad.
$$
\end{definition}
\noindent From the definition we can see that $B_n(t)$ corresponds to the sum of monomials $t_{h_1}\cdots t_{h_k}$ (where every $t_{h_j}$ could be repeated) whose coefficients are the number of way to write the word $t_{h_1}\cdots t_{h_k}$, when $\sum_{j=1}^{k}h_j=n$. For example,
$$ B_4(t)=t_4+2t_1t_3+t_2t_2+3t_1t_1t_2+t_1t_1t_1t_1=t_4+2t_1t_3+t_2^2+3t_1^2t_2+t_1^4. $$
This combinatorial interpretation of the Bell polynomials  could be easily described in terms of colored compositions. It follows that
$$A_n(x_1,\ldots,x_n)=B_n(x_1,\ldots,x_n)$$
for $i=1,\ldots,n$ and $\forall n\geq1$. Moreover, in \cite{bcm} a connection between Invert operator and Bell polynomials has been highlighted. Given a sequence $a=(a_n)_{n=0}^\infty$, then $(I(a))_n=B_n(a_0,\ldots,a_{n-1})$. Consequently we have an important relation with colored compositions: thinking of a coloration $X$ as a sequence $a$ to which apply Invert operator and such that $a_n=x_{n+1}$ for all $n\geq 0$, the following equality holds
\begin{equation} \label{inva}
A(X)=I(X).
\end{equation}
This is a very interesting result that allows us to use the Invert operator in the study of colored compositions.
\begin{remark}
Let us observe that Eq. (\ref{inva}) holds for any coloration $X$, even in the case of occurrence of zeros in the sequence $X$. In fact we recall that, if the equality $x_i=0$  holds for some $i$, the length $i$ does not appear in the  compositions so the polynomial $A_n(X)$ is independent of the variable $x_i$.
\end{remark}
As an immediate consequence of  Eq. (\ref{inva}), for every coloration $X$ if we apply the Invert operator we obtain the sequence $A(X)$ corresponding to the number of colored compositions for any integer $n$.\\
\indent For example,  when $X=(1,2,3,4,\ldots)$, we can immediately find the known result $A_n(X)=F_{2n} \forall n \geq 1 $ (where $F_n$ are the Fibonacci numbers, see \cite{Agra2}), as in the classical $n$--colour compositions. Indeed, in this case the generating function of $X$ is $\cfrac{1}{(1-t)^2}$ (see, e.g., \cite{Plouffe}) and by definition of the Invert operator, the sequence $(A_n)_{n=1}^\infty=I(X)$ has generating function
$$\cfrac{\frac{1}{(1-t)^2}}{1-t\frac{1}{(1-t)^2}}=\cfrac{1}{1-3t+t^2}$$
which is the generating function of the sequence A001906 (1, 3, 8, 21, 55, 144, 377, 987, 2584,$\ldots$) in OEIS, \cite{Sloane} starting from 1, i.e., the bisection of Fibonacci sequence.\\
\indent In the following example, we examine what happens when the coloration is given by the Catalan numbers A000108.
\begin{example}
Let us consider the sequence $$(C_n)_{n=0}^\infty=(1, 1, 2, 5, 14, 42, 132, 429, 1430,\ldots)$$ of the Catalan numbers. For the coloration $X=(x_1, x_2,\ldots)$, where $x_i=C_{i-1}$ for every $i\geq1$, it is well--known that (see \cite{Cameron}) $$I(X)=(1, 2, 5, 14, 42, 132, 429, 1430,\ldots).$$ We obtain that the number of colored compositions for the integer $n$ is $C_n$, where each part $i$ can occur in $C_{i-1}$ different ways. For example, let us consider the compositions of 4. We have one color for 1, one color for 2, two colors for 3, labeled as $3_1$ and $3_2$, five colors for 4, labeled as $4_1$, $4_2$, $4_3$, $4_4$ and $4_5$. Thus there are $14$ colored compositions of $4$:
$$1111 \quad 112 \quad 121 \quad 211 \quad 22 \quad 13_1 \quad 3_11\quad 13_2\quad 3_21 \quad 4_1 \quad 4_2 \quad 4_3 \quad 4_4 \quad 4_5 .$$
As pointed out in \cite{Stan}, this is one of the  combinatorial interpretations for the Catalan numbers.
\end{example}
In \cite{bcm} some interesting results about the action of the Invert operator on linear recurrence sequences have been proved. We recall them in the next theorems without proof.
\begin{theorem}
Let $a=(a_n)_{n=0}^\infty$ be a linear recurrence sequence of degree $r$, that is, $a_n=h_1a_{n-1}+ \cdots +h_ra_{n-r}$, with characteristic polynomial given by $f(t)=t^r-\sum_{i=1}^{r}h_it^{r-i}$, and generating function $a(t)=\cfrac{u(t)}{f^R(t)}\quad$, where  $f^R(t)=1-\sum_{i=1}^{r}h_{i}t^{i}$  denotes the reflected polynomial of $f(t)$ and $u(t)$ is a polynomial whose coefficients can be obtained from the initial conditions. Then $b=I^{(x)}(a)$ is a linear recurrence sequence, with characteristic polynomial $$(f^R(t)-xtu(t))^R,$$ whose coefficients are
\begin{equation}\label{Ifc}
h_1+xa_0, \quad h_{i+1}  + x\left(a_i  - \sum\limits_{j = 1}^{i} {h_j } a_{i - j}\right) \quad \text{for} \ i=1,\ldots,r-1\quad.
\end{equation}
\end{theorem}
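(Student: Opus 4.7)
The plan is to proceed by direct manipulation of generating functions, using only Definition \ref{invert} and the description of $a(t)$. Substituting $a(t)=u(t)/f^R(t)$ into the formula defining $I^{(x)}$ gives
$$b(t)=\frac{a(t)}{1-xta(t)}=\frac{u(t)/f^R(t)}{1-xt\,u(t)/f^R(t)}=\frac{u(t)}{f^R(t)-xt\,u(t)}.$$
Since $u(t)$ has degree at most $r-1$, the denominator $f^R(t)-xt\,u(t)$ is a polynomial of degree at most $r$ with constant term $1$, so $b$ is a linear recurrence whose characteristic polynomial is obtained by reflecting this denominator, namely $(f^R(t)-xt\,u(t))^R$. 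This already yields the first assertion of the theorem.

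To read off the explicit coefficients, I would determine $u(t)$ from the initial conditions via the identity $u(t)=a(t)\,f^R(t)$. Writing $u(t)=\sum_{k=0}^{r-1}u_k t^k$ and extracting the coefficient of $t^i$, I get $u_0=a_0$ and
$$u_i=a_i-\sum_{j=1}^{i}h_j a_{i-j}\qquad(1\le i\le r-1),$$
while the coefficients $u_k$ for $k\ge r$ vanish precisely because $a$ satisfies the recurrence with characteristic polynomial $f(t)$. Expanding
$$f^R(t)-xt\,u(t)=1-(h_1+xa_0)t-\sum_{i=1}^{r-1}\bigl(h_{i+1}+xu_i\bigr)t^{i+1},$$
and reflecting, immediately reproduces formula \eqref{Ifc}.

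The argument is essentially mechanical, so there is no substantial obstacle. The only point that genuinely requires care is making sure the polynomial identities hold as claimed at both the $t^0$ end (constant term equal to $1$, so reflection is well defined and monic of degree $r$) and at the $t^r$ end (the coefficient of $t^r$ is $h_r+xu_{r-1}$, which may vanish for special $x$—in that case the recurrence has lower degree, but $(f^R(t)-xt\,u(t))^R$ still serves as a valid, though possibly non-minimal, characteristic polynomial, which is all the statement claims).
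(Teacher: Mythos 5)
Your proposal is correct and complete. Note that the paper itself gives no proof of this theorem (it is recalled from the reference \cite{bcm} explicitly ``without proof''), and your generating-function argument --- substituting $a(t)=u(t)/f^R(t)$ into $b(t)=a(t)/(1-xta(t))$, simplifying to $u(t)/(f^R(t)-xtu(t))$, and reading off the coefficients via $u(t)=a(t)f^R(t)$ --- is exactly the standard derivation one would expect there; your closing remark that the resulting characteristic polynomial need not be minimal is a correct and worthwhile precision.
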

\begin{theorem}
The sequence $b=I^{(x)}(a)$ is characterized by the following recurrence
\begin{equation}\label{invertreceq}
\left\{ \begin{array}{l}
b_n  = a_n  + x\sum\limits_{j = 0}^{n - 1} a_{n - 1 - j} b_j \quad \text{for} \quad n\geq1; \\ 
b_0  = a_0.\\ 
\end{array} \right. 
\end{equation}
\end{theorem}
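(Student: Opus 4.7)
The plan is to work directly from the generating-function definition of $I^{(x)}$ and translate it into a coefficient identity. By Definition \ref{invert}, if $b = I^{(x)}(a)$ then
\begin{equation*}
b(t) = \frac{a(t)}{1 - xt\,a(t)},
\end{equation*}
which I would immediately clear of denominators to obtain the functional equation
\begin{equation*}
b(t) = a(t) + xt\, a(t)\, b(t).
\end{equation*}
This is the only substantive manipulation required; the rest is coefficient extraction.

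Next, I would extract the coefficient of $t^n$ on both sides. On the left this is simply $b_n$. On the right, the coefficient of $t^n$ in $a(t)$ is $a_n$, while the coefficient of $t^n$ in $xt\,a(t)b(t)$ equals $x$ times the coefficient of $t^{n-1}$ in the Cauchy product $a(t)b(t)$, namely $x\sum_{j=0}^{n-1} a_{n-1-j} b_j$. Putting these together gives exactly the stated recurrence for $n\geq 1$.

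Finally, I would treat the initial condition $n=0$ separately: the term $xt\,a(t)b(t)$ has no constant term, so the constant-term identity reduces to $b_0 = a_0$, matching the claim. Since every step is a direct consequence of the definition and the standard Cauchy product formula, I do not expect any genuine obstacle; the only thing to be careful about is indexing the sum correctly after pulling out the factor $t$, so that the upper limit is $n-1$ and the arguments of $a$ and $b$ sum to $n-1$.
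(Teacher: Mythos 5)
Your proposal is correct. Note that the paper itself gives no proof of this statement: it is recalled from the reference \cite{bcm} explicitly ``without proof,'' so there is nothing in the paper to compare against. Your generating-function argument is the standard one and is complete: clearing the denominator is legitimate because $1-xta(t)$ has constant term $1$ and is therefore invertible in the ring of formal power series, and the coefficient extraction from $b(t)=a(t)+xt\,a(t)b(t)$ yields exactly the stated recurrence. The only point worth making explicit, since the theorem says the recurrence \emph{characterizes} $b$, is uniqueness; this is immediate because the recurrence expresses $b_n$ in terms of $b_0,\dots,b_{n-1}$ and the $a_j$, so it determines the sequence completely by induction.
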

\noindent The previous theorems are a helpful tool for managing the action of the Invert operator on linear recurrence sequences. They allow us to easily describe the sequence $A(X)$ when the coloration $X$ is described by a linear recurrence sequence. We emphasize some applications of this fact in the following propositions.
\begin{proposition}
The number of colored compositions of a non--negative integer $n$ having $k$ possible colors for any part is $k(k+1)^{n-1}$.
\end{proposition}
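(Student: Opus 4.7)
The plan is to exploit Eq.~(\ref{inva}) and reduce the counting problem to a short generating function computation. Here the coloration is the constant sequence $X = (k,k,k,\ldots)$, i.e.\ $x_i = k$ for every $i \geq 1$. Viewing $X$ as the input sequence $a$ to the Invert operator via $a_n = x_{n+1} = k$, the ordinary generating function is simply
\begin{equation*}
a(t) = \sum_{n=0}^{\infty} k\,t^n = \frac{k}{1-t}.
\end{equation*}

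Next I would feed this into the definition of the Invert operator. The denominator becomes
\begin{equation*}
1 - t\cdot\frac{k}{1-t} = \frac{1-t-kt}{1-t} = \frac{1-(k+1)t}{1-t},
\end{equation*}
so that
\begin{equation*}
b(t) = \frac{a(t)}{1 - t\,a(t)} = \frac{k}{1-(k+1)t} = \sum_{n=0}^{\infty} k(k+1)^{n}\, t^{n}.
\end{equation*}
By (\ref{inva}) the sequence $A(X)$ coincides with $I(X)$, hence (matching the indexing of $A(X)$, which starts at $n=1$, against that of $b$, which starts at $n=0$) we read off $A_n(X) = k(k+1)^{n-1}$ for every $n \geq 1$, as claimed.

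There is essentially no obstacle: the whole point of equation~(\ref{inva}) is to make this kind of computation routine, and the constant coloration is the simplest nontrivial case. If desired, one can double-check the answer combinatorially, since every ordinary composition of $n$ with $p$ parts can be coloured in $k^{p}$ ways, giving
\begin{equation*}
A_n(X) = \sum_{p=1}^{n} \binom{n-1}{p-1} k^{p} = k(1+k)^{n-1},
\end{equation*}
in perfect agreement with the generating function derivation. I would present the Invert-operator proof as the main one, to emphasise the utility of (\ref{inva}), and mention the binomial identity as a combinatorial confirmation.
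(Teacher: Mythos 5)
Your proof is correct and follows essentially the same route as the paper: both reduce the claim via Eq.~(\ref{inva}) to computing $I(X)$ for the constant coloration $X=(k,k,k,\ldots)$. The only cosmetic difference is that the paper invokes the quoted linear-recurrence formulas (\ref{Ifc}) and (\ref{invertreceq}) to conclude that $I(X)$ satisfies $b_n=(k+1)b_{n-1}$ with $b_0=k$, whereas you carry out the equivalent two-line generating-function computation directly from the definition of the Invert operator (and add a pleasant binomial-theorem cross-check).
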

\begin{proof}
The sequence of colors is $X=(k,k,k,k,k,\ldots)$, which is a linear recurrence sequence with characteristic polynomial $t-1$ and initial condition $k$. By Eq. (\ref{Ifc}) we immediately have that $A(X)=I(X)$ is a linear recurrence sequence with characteristic polynomial $t-(k+1)$ and by Eq. (\ref{invertreceq}) the initial condition is $k$.
\end{proof}
\begin{proposition}
Forbidding the use of the integer $k$, the number of compositions of a non--negative integer $n$ are $a_{n+1}$, where
\begin{equation} \label{nok}\begin{cases} a_n=2a_{n-1}-a_{n-k}+a_{n-(k+1)},\quad \forall n\geq k+1 \cr a_{i}=2^{i},\quad i=0,\ldots,k-1 \cr a_k=2^k-1  \end{cases}\end{equation}
\end{proposition}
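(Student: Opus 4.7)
The plan is to realize the condition ``forbidding the integer $k$'' as a coloration and then apply (\ref{inva}). Let $X$ be the coloration with $x_{i}=1$ for $i\neq k$ and $x_{k}=0$, so that the colored compositions with coloration $X$ are exactly the ordinary compositions of $n$ whose parts avoid $k$. By (\ref{inva}) we have $A(X)=I(X)$, and the task reduces to computing $I(X)$ explicitly and reading off its linear recurrence.

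First I would write the ordinary generating function of $X$, viewed as the sequence $(x_{n+1})_{n\geq 0}$, by subtracting the single missing entry from that of the all--ones sequence:
\[
a(t)=\frac{1}{1-t}-t^{k-1}=\frac{1-t^{k-1}+t^{k}}{1-t}.
\]
Substituting into the definition of the Invert operator and simplifying gives
\[
b(t)=\frac{a(t)}{1-t\,a(t)}=\frac{1-t^{k-1}+t^{k}}{1-2t+t^{k}-t^{k+1}}.
\]
The denominator is precisely the reflected characteristic polynomial of the recurrence $a_{n}=2a_{n-1}-a_{n-k}+a_{n-(k+1)}$ in (\ref{nok}); since the numerator has degree $k$, this recurrence is valid for every $n\geq k+1$, as claimed in the statement.

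For the initial conditions I would match coefficients of $t^{j}$, $j=0,\ldots,k$, in the identity $b(t)(1-2t+t^{k}-t^{k+1})=1-t^{k-1}+t^{k}$: for $j<k-1$ the right-hand side contributes only the constant $1$ at degree $0$, so the doubling relation $b_{j}=2b_{j-1}$ forces $b_{j}=2^{j}$, while the $-1$ at $t^{k-1}$ and the $+1$ at $t^{k}$ supply the corrections yielding the boundary values listed in (\ref{nok}). The initial data also admit a transparent combinatorial check: for $n<k$ every composition of $n$ is automatically valid (since no part can be as large as $k$), whereas for $n=k$ exactly one composition, the singleton $(k)$, must be removed. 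The main obstacle is the indexing bookkeeping required to align the $0$--indexed Invert output with the $1$--indexed sequence $A(X)$ from (\ref{inva}) and with the shifted indexing $a_{n+1}$ adopted in the statement; once these three conventions are reconciled, both the recurrence and the initial conditions fall out of the single generating function computation above.
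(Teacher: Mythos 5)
Your overall strategy is sound and is genuinely different from the paper's: the paper treats $X=(1,\ldots,1,0,1,1,\ldots)$ as a linear recurrence sequence with characteristic polynomial $t^{k+1}-t^k$ and invokes Eq.~(\ref{Ifc}) to obtain the characteristic polynomial $t^{k+1}-2t^k+t-1$ of $I(X)$, together with Eq.~(\ref{invertreceq}) for the initial terms, whereas you compute the generating function of $I(X)$ directly from the definition of the Invert operator. Your computation
$$b(t)=\frac{1-t^{k-1}+t^{k}}{1-2t+t^{k}-t^{k+1}}$$
is correct, it identifies the same recurrence as the paper, and the degree count giving validity for $n\geq k+1$ is right.

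The gap is in the last step, which you defer to ``indexing bookkeeping'' without carrying it out --- and it does not go through as described. Matching coefficients in $b(t)\left(1-2t+t^{k}-t^{k+1}\right)=1-t^{k-1}+t^{k}$ gives $b_j=2^j$ only for $j\leq k-2$; the $-t^{k-1}$ and $+t^{k}$ terms then force $b_{k-1}=2^{k-1}-1$ and $b_k=2^{k}-2$. Hence $I(X)=(1,2,\ldots,2^{k-2},\,2^{k-1}-1,\,2^{k}-2,\ldots)$, which is \emph{not} a shift of the sequence $(2^0,2^1,\ldots,2^{k-1},2^{k}-1,\ldots)$ defined by (\ref{nok}): that sequence has generating function $1/(1-2t+t^{k}-t^{k+1})$, and the actual relation is $1+t\,b(t)=(1-t)a(t)$, i.e.\ the number of compositions of $n$ avoiding $k$ equals $a_n-a_{n-1}$, a first difference rather than a shift. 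Your own sanity check already exposes this: for $n=k$ you correctly count $2^{k-1}-1$ compositions, which cannot be matched to the boundary values $2^{k-1}$ and $2^{k}-1$ of (\ref{nok}) by any re-indexing, and for $k=2$, $n=1$ the literal statement would require $a_2=3$ compositions of $1$. To be fair, the paper's own proof asserts the same initial conditions $(1,2,\ldots,2^{k-1},2^{k}-1)$ for $I(X)$, and these are likewise not what Eq.~(\ref{invertreceq}) produces; but in a blind proof you are responsible for either deriving the stated boundary values or flagging that they cannot be derived, and the one step you left as routine is precisely the one that fails.
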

\begin{proof}
Under this condition, the coloration of the compositions is the sequence
$$X=(\underbrace{1,\ldots,1}_{k-1},0,1,1,1,\ldots),$$
which is a linear recurrence sequence with characteristic polynomial $t^{k+1}-t^k$ and initial conditions $(1,\ldots,1,0,1)$. Thus, by Eq. \ref{Ifc}, $I(X)$ is a linear recurrence sequence with characteristic polynomial $t^{k+1}-2t^k+t-1$ and, by Eq. (\ref{invertreceq}), it has initial conditions $(1,2,\ldots,2^{i},\ldots,2^{k-1},2^k-1)$.
\end{proof}
\begin{remark}
The compositions studied in the previous proposition can be found in \cite{Chia} (for the case $k=2$) and \cite{Chi} (for the general case), where the recurrence (\ref{nok}) has been proved in a different way.
\end{remark}

Another consequence of Eq. (\ref{inva}) is summarized in the following
\begin{theorem}\label{invinv}
For every sequence $a=(a_n)_{n=1}^\infty$ with generating function $a(t)$, $a_n$ is the number of colored compositions of $n$, $\forall n\geq1$, with coloration $X$ if and only if $I^{(-1)}(a)$ is a sequence of non--negative integers and in this case $X=I^{(-1)}(a)$, where $I^{(-1)}(a)$ has generating function $\frac{a(t)}{1+ta(t)}.$
\end{theorem}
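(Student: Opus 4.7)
The plan is to reduce the theorem to the observation that $I^{(-1)}$ is the formal inverse of $I$ on power series. The verification is a one-line computation: starting from $b(t)=a(t)/(1-ta(t))$ and rearranging as $b(t)(1-ta(t))=a(t)$, one solves for $a(t)$ to obtain $a(t)=b(t)/(1+tb(t))$, so that $a=I^{(-1)}(b)$; the symmetric manipulation shows $I(I^{(-1)}(a))=a$. This is the only genuine computation in the argument and it is routine.

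For the forward implication, I would assume that $a_n$ counts the colored compositions of $n$ with some coloration $X$. By Eq.~(\ref{inva}) this means $a=A(X)=I(X)$ (under the indexing convention $a_n=x_{n+1}$ of that equation). Applying $I^{(-1)}$ to both sides yields $I^{(-1)}(a)=X$, which by Definition~1 is a sequence of non-negative integers; as a byproduct the coloration is uniquely determined by $a$ and must equal $I^{(-1)}(a)$. For the converse, I would suppose $I^{(-1)}(a)$ is a sequence of non-negative integers and set $X:=I^{(-1)}(a)$. Then $X$ is a legitimate coloration in the sense of Definition~1, and applying $I$ recovers $I(X)=I(I^{(-1)}(a))=a$; combined with Eq.~(\ref{inva}), this gives $A(X)=I(X)=a$, so $a_n$ is precisely the number of colored compositions of $n$ with coloration $X$.

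There is no serious obstacle here; the only bookkeeping to watch is the indexing. Since $a=(a_n)_{n=1}^\infty$ has generating function without constant term and $1+ta(t)$ has constant term $1$, the formal series $a(t)/(1+ta(t))$ is well-defined and itself has no constant term, so $I^{(-1)}(a)$ is naturally indexed from $n=1$, as required for a coloration. In short, the theorem is an immediate corollary of Eq.~(\ref{inva}) combined with the formal invertibility of the Invert operator, and the nonnegative-integrality condition on $I^{(-1)}(a)$ is precisely the obstruction that decides whether $a$ arises as a composition count.
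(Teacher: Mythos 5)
Your proof is correct and follows essentially the same route as the paper, which states this theorem without proof as "another consequence of Eq.~(\ref{inva})": combine $A(X)=I(X)$ with the fact that $I^{(-1)}$ inverts $I$ at the level of generating functions, and observe that the non-negativity of $I^{(-1)}(a)$ is exactly what makes it a legitimate coloration. The only small slip is your closing remark about $a(t)$ having no constant term --- under the paper's convention the generating function of a sequence indexed from $1$ (via $a_n=x_{n+1}$) has constant term $a_1$ --- but this does not affect the argument, since $1+ta(t)$ is invertible as a formal power series in any case.
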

Thus, the sequence of natural numbers clearly does not represent any colored compositions, since
$$I^{(-1)}(1, 2, 3, 4, 5, 6,\ldots)=(1, 1, 0, -1, -1, 0,\ldots).$$
\begin{remark}
Using Theorem \ref{invinv}, we can easily answer a question posed in \cite{Heu} (Research Direction 3.4, page 88). Translating it to terms of colored compositions, the kernel of the question is finding for which colorations the Fibonacci numbers $(F_n)_{n=0}^\infty=(0, 1 ,1 ,2 ,3 ,5 ,8 ,\ldots)$ count the number of some colored compositions. It is well--known that if $X=(1,1,0,0,0,\ldots)$, i.e., we only have one choice for integer 1 and one choice for integer 2, then the number of compositions for an integer $n$ is $F_{n+1}$ and, indeed, we have
$$I^{(-1)}(1,2,3,5,8,13,\ldots)=(1,1,0,0,0,\ldots).$$
Moreover from the equality
$$I^{(-1)}(1,1,2,3,5,8,13,\ldots)=(1, 0, 1, 0, 1, 0, 1, 0,\ldots).$$
we have only odd integers with one color, then any integer $n$ can be composed in $F_n$ different ways and
$$I^{(-1)}(0,1,1,2,3,5,8,13,\ldots)=(0, 1, 1, 1, 1, 1, 1, 1,\ldots).$$
By Eq. (\ref{invertreceq}), for every sequence $a=(a_n)_{n=0}^\infty$, we have $(I^{(-1)}(a))_1=a_1-a_0^2$. Now if $a_1=F_n$ for some $n$, then $a_1-a_0^2= F_n-F_{n-1}^2<0$ for $n\geq 3$, so no such coloration $X$ can exist such that $A(X)$ is the Fibonacci sequence shifted by three or more steps. Summarizing we uniquely have the following possibilities
$$X=(0,1,1,1,1,1,\ldots),\quad A(X)=(F_n)_{n=0}^\infty$$ 
$$X=(1,0,1,0,1,0,\ldots),\quad A(X)=(F_{n+1})_{n=0}^\infty$$
$$X=(1,1,0,0,0,0,\ldots),\quad A(X)=(F_{n+2})_{n=0}^\infty.$$
\end{remark}
\noindent Furthermore, using the Invert operator we can provide a connection in terms of colored compositions among the $r$--bonacci numbers. We recall that the sequence $F^{(r)}=(F^{(r)}_n)_{n=1}^\infty$ of the $r$--bonacci numbers is the linear recurrence sequence with initial conditions $\underbrace{(0,\ldots,0,1)}_r$ and characteristic polynomial $t^r-t^{r-1}-\cdots-t-1$ (for $r=2$ we have the Fibonacci numbers A000045, for $r=3$ the Tribonacci numbers A000073, etc $\ldots$). The  interesting result proved in \cite{bcm} that
$$I^{(-1)}(F^{(r+1)})=(0,F^{(r)}_1,F^{(r)}_2,F^{(r)}_3,...),$$
interpreted in terms of colored compositions, shows that the $r$--bonacci number $F_n^{(r)}$ corresponds to the number of colored compositions of $n+1$, where a part of length $k$ can be chosen from among $F^{(r+1)}_k$ possibilities (colors).\\
\indent Finally, the relations between the Invert operator and colored compositions provide a new formula for the Invert operator. Indeed, it is well--known that the sequence $A(X)$ is a linear recurrence sequence \cite{count}:
$$A_m(X)=x_1A_{m-1}(X)+x_2A_{m-2}(X)+...+x_m.$$
Thus, from (\ref{inva}) we plainly obtain
$$(I(X))_m=x_1(I(X))_{m-1}+x_2(I(X))_{m-2}+...+x_m,$$
where $X=(x_1,x_2,...)$ can be any integer sequence.

%%%%%%%%%%%%%%%%%%%%%%%%%%%%%%%%%%%%%%%%%%%%%%%%%%%%%%%
\section{Elegant compositions with the ``black tie"}

\noindent In this section we study colored compositions where we impose a restriction on the coloration. We now take the viewpoint of a composition as a tiling, where each part of size $k$ in the composition corresponds to a tile of size $1\times k$. This device  will allow us to obtain interesting combinatorial results as a new interpretation for the convolution product between sequences. Let us suppose that the black color does not belong to the coloration $X$, i.e., there are no black parts.
\begin{definition}
For all the colorations $X$, we define $M_n(X)$ as the set of colored compositions of $n$ with coloration $X$ such that any composition $\textbf{b}\in M_n(X)$ has exactly one part of length 1 and color black (\emph{black square}). We call \emph{tiling} a black tie composition belonging to $M_n(X)$. Moreover, we indicate $\lvert M_n(X)\rvert=B_n(X)$.
\end{definition}
Many sequences, which could not represent colored compositions, obtain a new interpretation associated to black tie compositions. For example, as we pointed out in the previous section, the sequence of natural numbers does not represent any colored composition, but clearly represents the black tie composition with coloration $X=(1,0,0,0,0,\ldots)$, i.e., $(B_n(X))_{n=1}^\infty=(1,2,3,4,\ldots)$.
\begin{theorem}\label{t4}
Given the coloration $X$, we have 
$$\begin{cases} B_0(X)=0,\quad B_1(X)=1,\cr B_n(X)=R_{n-1}(X)\quad \text{for}\quad n\geq2\quad . \end{cases}$$
\end{theorem}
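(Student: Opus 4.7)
The plan is to prove the recursion $B_n(X)=R_{n-1}(X)$ for $n\geq 2$ by constructing an explicit bijection in the spirit of the tiling viewpoint introduced just before the statement, and then to verify the two base cases $B_0(X)=0$ and $B_1(X)=1$ by direct inspection.

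First I would exhibit a bijection between $M_n(X)$ and the set of pairs $(\textbf{b},\beta)$, where $\textbf{b}$ ranges over $L_{n-1}(X)$ and $\beta$ ranges over the $r(\textbf{b})$ break-points of $\textbf{b}$. Thinking of each composition as a tiling of a $1\times n$ strip, an element $\textbf{T}\in M_n(X)$ contains a unique black square; removing that square and closing up the resulting gap produces a tiling of length $n-1$, i.e.\ a composition $\textbf{b}\in L_{n-1}(X)$, while the location originally occupied by the black square records a distinguished break-point $\beta$ of $\textbf{b}$. Conversely, given $\textbf{b}\in L_{n-1}(X)$ together with any of its break-points $\beta\in\{0,k_1,k_1+k_2,\ldots,n-1\}$, inserting the black square of length $1$ at position $\beta$ yields a well-defined element of $M_n(X)$. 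These two operations are mutually inverse, so summing over all $\textbf{b}\in L_{n-1}(X)$ gives
$$B_n(X)=\sum_{\textbf{b}\in L_{n-1}(X)} r(\textbf{b}) = R_{n-1}(X).$$

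For the base cases, $M_0(X)=\emptyset$ since a black tie tiling must contain at least the mandatory black square of length $1$, so the total length is at least $1$; hence $B_0(X)=0$. Similarly $M_1(X)$ consists of the single tiling formed by the lone black square, since there is no room for any further part, so $B_1(X)=1$.

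The main delicate point is making sure that all $p(\textbf{b})+1$ break-points of $\textbf{b}$ — in particular the two boundary ones, before the first part and after the last part — are counted as legitimate insertion sites, and that distinct choices of $\beta$ give distinct black tie tilings in $M_n(X)$. Once this is checked, the identity $B_n(X)=R_{n-1}(X)$ drops out of the bijection with no further calculation.
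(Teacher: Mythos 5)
Your proposal is correct and follows essentially the same route as the paper: the paper's proof also inserts the black square at each of the $r(\textbf{b})$ break-points of a composition $\textbf{b}\in L_{n-1}(X)$ and observes that every tiling in $M_n(X)$ arises uniquely this way, together with the same direct check of the base cases. Your version merely spells out the inverse map (removing the black square and closing the gap) more explicitly than the paper does.
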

\begin{proof}
It is straightforward to see that $B_0(X)=0$ and $B_1(X)=1$. Given $\textbf{b}\in L_{n-1}(X)$, if we insert a black square into any break point of $\textbf{b}$, then we obtain a tiling in $M_n(X)$ and obviously every tiling in $M_n(X)$ is formed in this way, thus $B_n(X)=R_{n-1}(X)$, for $n\geq2$.
\end{proof}
Since $R_n(X)=P_n(X)+A_n(X)$ for $n\geq1$ for any coloration $X$, we obtain the following result.
\begin{corollary}
For $n\geq 1$,  $B_n(X)=P_{n-1}(X)+A_{n-1}(X).$
\end{corollary}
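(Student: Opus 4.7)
The proof reduces to Theorem~\ref{t4} combined with the identity $R_n(X) = P_n(X) + A_n(X)$ already recorded in the sentence preceding the corollary. The plan is as follows. For $n \geq 2$, Theorem~\ref{t4} gives $B_n(X) = R_{n-1}(X)$, so summing the pointwise equality $r(\mathbf{b}) = p(\mathbf{b}) + 1$ over $\mathbf{b} \in L_{n-1}(X)$ rewrites $R_{n-1}(X)$ as $P_{n-1}(X) + A_{n-1}(X)$, which is precisely the claimed equation. There is nothing deeper to do in this range: the corollary is just the restatement of Theorem~\ref{t4} in terms of the summary invariants $P$ and $A$ rather than of $R$.

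The only case that needs a small separate check is the boundary $n = 1$, since Theorem~\ref{t4} treats $B_1(X) = 1$ as a hand-set initial value rather than through $R_0$. Here the right-hand side is $P_0(X) + A_0(X)$, and by the conventions fixed earlier in the paper one has $A_0(X) = 1$ (from the remark after Definition~1, where the empty composition of $0$ is admitted) and $P_0(X) = 0$ (the empty composition has no parts, so the defining sum is empty). Hence both sides equal $1$, matching $B_1(X)$. I do not foresee any genuine obstacle in the argument; the only point worth flagging is this reconciliation of the initial value $B_1(X) = 1$ with the conventions on $A_0$ and $P_0$, which is what makes the statement valid uniformly for all $n \geq 1$ instead of only for $n \geq 2$.
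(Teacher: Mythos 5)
Your proof is correct and follows the same route the paper intends: the corollary is just Theorem~\ref{t4} combined with summing $r(\mathbf{b})=p(\mathbf{b})+1$ over $L_{n-1}(X)$ to get $R_{n-1}=P_{n-1}+A_{n-1}$. Your explicit check of the $n=1$ boundary (reconciling $B_1(X)=1$ with $P_0(X)=0$ and $A_0(X)=1$) is a point the paper glosses over, and it is worth making since the identity $R_n=P_n+A_n$ is only asserted for $n\geq1$.
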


Now, we point out new and interesting combinatorial results, exploiting these relations between colored compositions and black tie compositions. First of all, we recall that the number of colored compositions of the integer $n$ with coloration $X=(x_1,x_2,\ldots)$ has an explicit formula involving the multinomial coefficients:
\begin{equation} \label{Amult} A_n(X)=\sum_{k_1+2k_2+\cdots +nk_n=n}(k_1,...,k_n)!x_1^{k_1}\cdots x_n^{k_n},\end{equation}
where $(k_1,...,k_n)!$ is the multinomial coefficient
$$(k_1,...,k_n)!=\cfrac{(k_1+...+k_n)!}{k_1!\cdots k_n!}.$$
This equation simply underlines that for a composition of the integer $n$ we are using $k_1$ parts of size 1 chosen from $x_1$ different colors, $k_2$ parts of size 2 chosen from $x_2$ different colors, etc$\ldots$ (see, e.g., \cite{count}).
\begin{remark}
Eqs. (\ref{inva}) and (\ref{Amult}) provide a new closed formula for the Invert operator.
\end{remark}
\begin{theorem}\label{t5}
Given $X=(x_1,x_2,\ldots)$, we have that
$$B_{n+1}(X)=\sum_{k_1+2k_2+\cdots+nk_n=n}(k_1,...,k_n)!x_1^{k_1} \cdots x_n^{k_n}(1+\sum_{j=1}^nk_j).$$
\end{theorem}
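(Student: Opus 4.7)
The plan is to combine Theorem \ref{t4} with the multinomial formula (\ref{Amult}) by weighting each term according to its number of parts. First I would invoke Theorem \ref{t4} to replace the left side: since $n+1 \geq 2$, we have $B_{n+1}(X) = R_n(X)$, and by the identity $R_n(X) = P_n(X) + A_n(X)$ recalled just before Theorem \ref{t5}, it suffices to prove
\begin{equation*}
P_n(X) + A_n(X) = \sum_{k_1 + 2k_2 + \cdots + nk_n = n} (k_1,\ldots,k_n)!\, x_1^{k_1} \cdots x_n^{k_n}\Bigl(1 + \sum_{j=1}^n k_j\Bigr).
\end{equation*}
The $+1$ summand on the right, after distributing, reproduces $A_n(X)$ exactly by (\ref{Amult}), so the whole problem reduces to showing
\begin{equation*}
P_n(X) = \sum_{k_1 + 2k_2 + \cdots + nk_n = n} (k_1,\ldots,k_n)!\, x_1^{k_1} \cdots x_n^{k_n}\,\sum_{j=1}^n k_j.
\end{equation*}

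To establish this, I would partition the sum $P_n(X) = \sum_{\mathbf{b}\in L_n(X)} p(\mathbf{b})$ according to the \emph{type} of a composition, namely the tuple $(k_1,\ldots,k_n)$ recording how many parts of each size appear. For a fixed type, the combinatorial content of (\ref{Amult}) tells us that the number of compositions of $n$ of that type is precisely $(k_1,\ldots,k_n)!\,x_1^{k_1}\cdots x_n^{k_n}$, because the multinomial coefficient counts arrangements of the parts and the monomial accounts for the independent color choices. The crucial observation is that every composition $\mathbf{b}$ of that type has the \emph{same} number of parts, namely $p(\mathbf{b}) = k_1 + k_2 + \cdots + k_n$. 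Summing $p(\mathbf{b})$ over all compositions of that fixed type therefore factors as the count times $\sum_j k_j$, giving exactly the claimed summand.

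Adding the two pieces $P_n(X)$ and $A_n(X)$ termwise across the same index set $\{(k_1,\ldots,k_n) : k_1+2k_2+\cdots+nk_n = n\}$ yields the factor $\sum_j k_j + 1$, completing the identity. I do not anticipate a real obstacle here: the argument is essentially bookkeeping, and the only subtle point is making sure the $n=0$/$n=1$ boundary behaves correctly (which is handled by the explicit values $B_0(X)=0$, $B_1(X)=1$ in Theorem \ref{t4}, and is consistent with the convention $A_0(X)=1$ that makes the empty sum $P_0(X)=0$ give $B_1(X)=1$).
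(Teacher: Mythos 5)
Your proposal is correct and follows essentially the same route as the paper: the paper's proof also counts, for each composition of fixed type $(k_1,\ldots,k_n)$, the $r(\mathbf{b})=p(\mathbf{b})+1=1+\sum_j k_j$ tilings obtained by inserting the black square at a break point, and then sums over types using Eq. (\ref{Amult}). Your splitting of the weight $1+\sum_j k_j$ into the $A_n(X)$ and $P_n(X)$ contributions is just a slight reorganization of the same argument.
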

\begin{proof}
Every composition $\textbf{b}\in L_n(X)$ generates $r(\textbf{b})$ tilings in $M_{n+1}(X)$ (inserting a black square in the positions of the break points of $\textbf{b}$). Thus, from Eq. (\ref{Amult}), the claim follows since $r(\textbf{b})=p(\textbf{b})+1$ and the number of parts of each composition is $\sum_{j=1}^nk_j$.
\end{proof}

\begin{theorem}\label{t6}
Given $X=(x_1,x_2,\ldots)$, we have
$$B_{n}(X)=\sum_{k_1+2k_2+\cdots +(n-1)k_{n-1}=n}(k_1,\ldots,k_n)!x_1^{k_1-1}x_2^{k_2} \cdots x_{n-1}^{k_{n-1}}k_1.$$
\end{theorem}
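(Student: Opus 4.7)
The plan is to count tilings in $M_n(X)$ directly by grouping them according to their part-size profile. Given a tiling $\textbf{b}\in M_n(X)$, let $k_i$ denote the number of parts of size $i$, with the convention that the unique black square is counted among the $k_1$ size-1 parts. Any such profile satisfies $\sum_{i\ge 1} i k_i = n$ and $k_1\ge 1$; moreover $k_n=0$ automatically (one unit of length is already occupied by the black square), so only $k_1,\dots,k_{n-1}$ are relevant, giving the constraint $k_1+2k_2+\cdots+(n-1)k_{n-1}=n$.

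For a fixed admissible profile $(k_1,\dots,k_{n-1})$, I would count the associated tilings in two steps. First, the number of linear arrangements of the parts in which one of the $k_1$ size-1 slots is singled out as carrying the black square equals $k_1\cdot (k_1,\dots,k_{n-1})!$: the multinomial coefficient orders the size-types into a sequence, and the factor $k_1$ chooses which of the $k_1$ size-1 slots gets the black square. Second, the color of the black square is fixed, while each of the remaining $k_1-1$ non-black size-1 parts and each size-$i$ part ($i\ge 2$) can be colored independently, contributing $x_1^{k_1-1}x_2^{k_2}\cdots x_{n-1}^{k_{n-1}}$ colorings. Summing $k_1(k_1,\dots,k_{n-1})!\,x_1^{k_1-1}\cdots x_{n-1}^{k_{n-1}}$ over all admissible profiles yields the claimed formula (the requirement $k_1\ge 1$ is automatically enforced by the factor $k_1$, so the sum may be taken over all non-negative $k_i$).

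The only subtle step is the arrangement count: one must recognize that distinguishing the black square from the other size-1 parts introduces the explicit factor $k_1$ rather than being absorbed into the multinomial coefficient. As a sanity check (and alternative route), the identity also follows from Theorem \ref{t5} applied with $n$ replaced by $n-1$, via the substitution $k_1=j_1+1$, $k_i=j_i$ for $i\ge 2$: a brief algebraic manipulation shows that $(j_1,\dots,j_{n-1})!\,(1+\sum_i j_i)$ equals $k_1\cdot(k_1,\dots,k_{n-1})!$ and that $x_1^{j_1}=x_1^{k_1-1}$, recovering the desired expression. This alternative derivation simultaneously confirms the consistency of Theorems \ref{t5} and \ref{t6}.
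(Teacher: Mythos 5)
Your main argument is correct and is essentially the paper's own proof: both count the tilings in $M_n(X)$ by part-size profile, using the multinomial coefficient to arrange the parts, the factor $k_1$ to select which size-1 slot is the black square, and $x_1^{k_1-1}x_2^{k_2}\cdots x_{n-1}^{k_{n-1}}$ to color the remaining parts, with the same observation that the size-$n$ part drops out. The additional algebraic derivation from Theorem \ref{t5} is a valid and worthwhile consistency check, but does not change the substance.
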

\begin{proof}
Let $\textbf{b}$ be a composition in $L_n(X)$ that has $k_1$ parts of length 1, $k_2$ parts of length 2,$\ldots$, $k_n$ parts of length $n$. We know that the possible number of compositions $\textbf{b}$ of this kind is
$$(k_1,\ldots, k_n)!x_1^{k_1} \cdots x_n^{k_n}.$$
To create a tiling in $M_n(X)$ from a composition $\textbf{b}\in L_n(X)$, we have to replace a part of size 1 in $\textbf{b}$ with a black square. The number of compositions with $k_1-1$ parts of length 1, $k_2$ parts of length 2,$\ldots$, $k_n$ parts of length $n$ is
$$(k_1,\ldots, k_n)!x_1^{k_1-1} \cdots x_n^{k_n}.$$
Moreover, this number must be multiplied by $k_1$ that is the number of possible replacements of a part of length 1 with the black square:
$$(k_1,\ldots, k_n)!x_1^{k_1-1}\cdots x_n^{k_n}k_1.$$
So the claim easily follows, observing that in the sum 
$$\sum_{k_1+2k_2+\cdots +(n-1)k_{n-1}+nk_n=n}(k_1,\ldots, k_n)!x_1^{k_1-1}x_2^{k_2} \cdots x_{n-1}^{k_{n-1}}x_n^{k_n}k_1$$
the only term containing $x_n$ corresponds to the choice $k_1=\cdots=k_{n-1}=0$ and $k_n=1$ and consequently $k_n$ and $x_n$ can be omitted from the sum.

\end{proof}

\begin{corollary}
Given $X=(x_1,x_2,\ldots)$, if we consider $A_n(X)$ as a polynomial in the variables $x_1,x_2,\ldots $, then
$$B_n(X)=\cfrac{\partial A_n(X)}{\partial x_1},\quad \forall n\geq1.$$
\end{corollary}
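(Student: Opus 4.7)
The plan is to prove this corollary by directly comparing, term by term, the multinomial formula for $A_n(X)$ given in Eq.~(\ref{Amult}) with the multinomial formula for $B_n(X)$ already established in Theorem~\ref{t6}. Since $A_n(X)$ is a polynomial in $x_1,\dots,x_n$, no analytic subtlety arises: we simply differentiate a finite sum of monomials.

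The first step is to apply $\partial/\partial x_1$ to (\ref{Amult}) term by term, obtaining
\[
\frac{\partial A_n(X)}{\partial x_1} \;=\; \sum_{k_1+2k_2+\cdots +nk_n=n} (k_1,\ldots,k_n)!\, k_1\, x_1^{k_1-1}x_2^{k_2}\cdots x_n^{k_n}.
\]
Then I would observe that the factor $k_1$ kills every term with $k_1=0$, so we may restrict the summation to tuples with $k_1\geq 1$. The key (and only) arithmetic remark is that whenever $k_1\geq 1$ and $k_1+2k_2+\cdots+nk_n=n$, we have $nk_n\leq n-k_1\leq n-1$, forcing $k_n=0$. Consequently the index set collapses to exactly the one appearing in Theorem~\ref{t6}, namely $k_1+2k_2+\cdots+(n-1)k_{n-1}=n$, and the summand matches term for term.

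I expect no real obstacle; the proof is essentially bookkeeping. The only subtle point is remembering to justify why the ``missing'' $k_n$ in Theorem~\ref{t6}'s index set is consistent with the full index set produced by differentiating (\ref{Amult}), and this is precisely the elementary inequality above. As a conceptual sanity check, one can also view the identity combinatorially: introducing a formal variable $y$ to mark a new ``black'' color for parts of size $1$, the polynomial $A_n(x_1+y,x_2,\ldots,x_n)$ enumerates colored compositions of $n$ in which each size-$1$ part may optionally be black, so the coefficient of $y^1$ counts compositions with exactly one black square, i.e., $B_n(X)$. That coefficient is $\partial A_n/\partial x_1$ by Taylor expansion, giving the same identity without invoking Theorem~\ref{t6}.
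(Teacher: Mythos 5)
Your proof is correct and follows exactly the route the paper intends: the corollary is stated without explicit proof because it amounts to differentiating Eq.~(\ref{Amult}) term by term and matching the result against Theorem~\ref{t6}, which is precisely what you do, including the small but necessary observation that the factor $k_1$ kills the $k_1=0$ terms and that $k_1\geq 1$ forces $k_n=0$, so the two index sets agree. Your alternative combinatorial argument via the marker variable $y$ is a pleasant independent verification but is not needed.
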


Combining Theorems \ref{t5} and \ref{t6}, we have proved, only using combinatorial arguments, the following remarkable identity involving multinomial coefficients.

\begin{theorem}
For non--negative integers $x_1,\ldots,x_n$ and non--negative integers $k_1, \ldots, k_n$, we have
\small
%$$\sum_{k_1+2k_2+\cdots+nk_n=n}\binom{n}{k_1;\ldots;k_n}x_1^{k_1} \cdots x_n^{k_n}(1+\sum_{j=1}^nk_j)=\sum_{k_1+2k_2+\cdots +nk_n=n}\binom{n}{k_1;\ldots;k_n}x_1^{k_1-1}x_2^{k_2} \cdots x_n^{k_n}k_1$$
$$\sum_{k_1+\cdots+nk_n=n}(k_1,\ldots, k_n)!x_1^{k_1} \cdots x_n^{k_n}\left(1+\sum_{j=1}^nk_j\right)=$$
$$=\sum_{k_1+\cdots +nk_n=n+1}(k_1,\ldots, k_n)!x_1^{k_1-1}x_2^{k_2} \cdots x_n^{k_{n}}k_1$$
\end{theorem}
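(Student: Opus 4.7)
The plan is essentially to recognize that both sides of the claimed identity are just two different closed forms for the same quantity $B_{n+1}(X)$, so the identity will follow by comparing Theorem \ref{t5} and Theorem \ref{t6} applied at appropriate indices.

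First, I would observe that the left-hand side is \emph{literally} the formula given by Theorem \ref{t5} for $B_{n+1}(X)$, with no manipulation needed. Next, I would apply Theorem \ref{t6} with $n$ replaced by $n+1$. This yields
\[
B_{n+1}(X) = \sum_{k_1+2k_2+\cdots+nk_n=n+1} (k_1,\ldots,k_{n+1})!\, x_1^{k_1-1} x_2^{k_2}\cdots x_n^{k_n}\, k_1,
\]
since the summation condition from Theorem \ref{t6} applied to $n+1$ is $k_1+2k_2+\cdots+nk_n = n+1$ (the upper index of the sum goes up to $((n+1)-1)k_{(n+1)-1}=nk_n$). The summation forces $k_{n+1}=0$, so the multinomial coefficient $(k_1,\ldots,k_{n+1})!$ collapses to $(k_1,\ldots,k_n)!$, matching the right-hand side of the stated identity verbatim. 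Chaining the two equalities gives the claim.

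The only minor point to address is the apparent $x_1^{k_1-1}$ when $k_1=0$: the factor $k_1$ in front of the monomial kills that term, so one can either interpret $x_1^{-1}\cdot 0=0$ or (cleaner) restrict the sum on the right to $k_1\geq 1$. I would remark on this explicitly to keep the expression unambiguous. There is no genuine obstacle here: the substance of the identity already lies in the combinatorial arguments of Theorems \ref{t5} and \ref{t6} (counting elements of $M_{n+1}(X)$ first by inserting a black square into a composition of $n$ at any of its $r(\mathbf{b})$ break points, and second by singling out which length-$1$ part of a composition of $n+1$ is the black square), and this final theorem is the algebraic reconciliation of those two counts.
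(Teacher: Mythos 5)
Your proposal is correct and is exactly the paper's argument: the paper states this theorem with no separate proof, introducing it with ``Combining Theorems \ref{t5} and \ref{t6}, we have proved\ldots'', i.e., both sides are the two closed formulas for $B_{n+1}(X)$. Your index bookkeeping (shifting Theorem \ref{t6} to $n+1$ and noting $k_{n+1}=0$ is forced) and your remark on the $k_1=0$ term are sound and, if anything, slightly more careful than the paper.
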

\normalsize
\begin{remark}
Black tie compositions induce a new transform on integer sequences: a sequence $X$ is changed into a sequence $B(X)$ by means of the closed formulas proved in Theorems \ref{t5} and \ref{t6}.
\end{remark}
The study of black tie compositions provides the first combinatorial interpretation for the \emph{conv} transformation of sequences defined in  \cite{Ber}. As a consequence we can develop a new and very useful method to evaluate the number of parts of colored compositions. Before stating the Theorem concerning these results, we underline that in the following with $A$ or $A(X)$ we refer to the sequence $(A_n)_{n=0}^\infty$ starting from 0. Moreover, we recall the action of the right-shift operator on a sequence $a$.
\begin{definition} \label{sigma}
The \emph{right-shift operator} $\sigma$ changes any sequence $a=(a_0,a_1,a_2,\ldots)$ as follows:
\begin{equation*} 
\sigma(a)=(a_1,a_2,a_3,\ldots)
\end{equation*}
\end{definition}
\begin{theorem}\label{t8}
Given a coloration $X$, the sequence $B(X)$ is the right shift $\sigma$ of the convolution product $*$ of $A(X)$ with itself
$$B(X)=\sigma(A(X)*A(X)).$$
\end{theorem}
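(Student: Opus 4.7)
The plan is to prove the identity through a direct combinatorial bijection that yields a convolution formula, then match it with the generating-function form of $\sigma(A*A)$.

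First I would use the unique black square in each tiling $\textbf{b} \in M_n(X)$ as a canonical separator: the parts lying to its left form a colored composition of some integer $k$ with $0 \le k \le n-1$, and the parts to its right form a colored composition of $n-1-k$. Either side is allowed to be empty, in which case we identify it with the unique empty composition (so $|L_0(X)|=1$, extending the convention from the remark in Section 1). Conversely, any triple (left composition, black square, right composition) of compatible sizes assembles into a unique tiling of size $n$. The resulting bijection gives
$$B_n(X) = \sum_{k=0}^{n-1} |L_k(X)| \cdot |L_{n-1-k}(X)|.$$

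Next I would translate this to generating functions. Let $\alpha(t) = \sum_{k \ge 0} |L_k(X)| t^k$, so that $\alpha(0)=1$. The displayed identity says that the ordinary generating function of the sequence $B(X)$ is $t \cdot \alpha(t)^2$. Under the convention of the present section the sequence $A(X)$ starts with $A_0 = 0$, differing from $(|L_k(X)|)_{k \ge 0}$ by a prepended zero, so its generating function is $a(t) = t \cdot \alpha(t)$. Hence $a(t)^2 / t = t \cdot \alpha(t)^2$, which coincides with the generating function of $B(X)$. Because $A_0 = 0$ forces $(A*A)_0 = 0$, the right-shift $\sigma$ acts on $A*A$ by dividing its generating function by $t$, producing precisely $a(t)^2/t$. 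Therefore $B(X) = \sigma(A(X) * A(X))$.

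The main obstacle is the indexing bookkeeping: the combinatorial side naturally uses $|L_0(X)|=1$ so that empty halves are counted once, whereas the convolution side requires $A_0 = 0$, and the shift $\sigma$ has to absorb exactly this discrepancy without producing an off-by-one error. Once one observes that $A(X)$ is $(|L_k(X)|)_{k \ge 0}$ with a leading zero prepended, the bijection of the first paragraph and the generating-function calculation of the second combine mechanically, and the base cases $B_0=0$, $B_1=1$ follow immediately from $(A*A)_1 = 2A_0A_1 = 0$ and $(A*A)_2 = A_1^2 = 1$.
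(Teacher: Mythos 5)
Your central combinatorial step is exactly the paper's: cut each tiling of $M_n(X)$ at its unique black square to get an ordered pair of (possibly empty) colored compositions, yielding $B_n(X)=\sum_{k=0}^{n-1}\lvert L_k(X)\rvert\,\lvert L_{n-1-k}(X)\rvert$. That bijection is correct and is the whole content of the paper's proof.

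The problem is in your reconciliation with the convolution. You assert that in this section $A(X)$ is $(\lvert L_k(X)\rvert)_{k\ge 0}$ with a zero prepended, i.e.\ $A_0=0$ and $a(t)=t\,\alpha(t)$. That is not the paper's convention: the Remark in Section 1 sets $A_0(X)=1$ (the empty composition), and ``starting from 0'' before Theorem \ref{t8} refers to the starting \emph{index}, not the value. This is confirmed by the example following Corollary \ref{parts}, where for $X=(1,1,0,0,\ldots)$ one has $A=(1,1,2,3,5,\ldots)$ and $A*A=(1,2,5,10,20,\ldots)$ --- under your convention $A*A$ would begin $(0,0,1,2,\ldots)$. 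With the correct $A_0=1$, your bijection gives $B_{n+1}(X)=\sum_{k=0}^{n}A_k(X)A_{n-k}(X)=(A*A)_n$, so the generating function of $B(X)$ is $t\,A(t)^2$ and ``right shift'' must be read as \emph{prepending} a zero (multiplying the generating function by $t$), not as the first-term-deletion operator that Definition \ref{sigma} literally describes; your base-case check $(A*A)_1=2A_0A_1=0$ likewise fails, since $(A*A)_1=2x_1=B_2$. In fairness, the paper's own Definition \ref{sigma} is at odds with the intended use of $\sigma$ in Theorem \ref{t8} (the subsequent corollary computes $\sum_n B_n t^n = t\,[A(t)]^2$, which is prepension, not deletion), but resolving that tension by silently redefining $A(X)$ contradicts both the stated convention and the worked examples, so as written your derivation proves the identity for a different sequence than the one in the statement.
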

\begin{proof}
Remembering that $L_0(X)$ is the empty set, we construct  every tiling of $M_{n+1}(X)$ by joining together a composition $\textbf{b}\in L_k(X)$, the black square, and a second composition $\textbf{c}\in L_{n-k}(X)$, where $k$ ranges between 0 and $n$. Thus, the number of tilings in $M_{n+1}(X)$ is $$B_{n+1}(X)=\sum_{k=0}^n A_{n-k}(X)A_k(X).$$
\end{proof}
This theorem provides a combinatorial interpretation in terms of counting black tie compositions for the operator \emph{conv}. In fact we recall that this operator maps a sequence $(a_n)_{n=0}^\infty$ into a sequence $(b_n)_{n=0}^\infty$ by means of
$$b_n=\sum_{k=0}^na_{n-k}a_k,\quad n=0,1,...$$
\begin{corollary} The generating function for $B(X)$ is $\frac{tf(t)^2}{(1-tf(t))^2}$ where $f(t)$ is the generating  function of $X$.
\end{corollary}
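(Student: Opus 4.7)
The plan is to convert the identity in Theorem~\ref{t8}, $B(X) = \sigma(A(X) * A(X))$, into a generating-function equation by applying the standard dictionary: convolution becomes multiplication of generating functions, and the right-shift $\sigma$ (Definition~\ref{sigma}) becomes the operation $c(t) \mapsto (c(t) - c_0)/t$ on power series.

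The first step is to compute the generating function $a(t)$ of $A(X)$. By Eq.~(\ref{inva}), $A(X) = I(X)$, and by the definition of the Invert operator applied to the sequence $X$ (whose generating function is $f(t)$), $I(X)$ has generating function $\frac{f(t)}{1 - tf(t)}$. Under the convention of Section~2, where $A(X) = (A_n(X))_{n=0}^\infty$ begins with $A_0(X) = 0$, the sequence $A(X)$ is obtained from $I(X)$ by prepending a zero; this multiplies the generating function by $t$, producing $a(t) = \frac{tf(t)}{1 - tf(t)}$.

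Next, the self-convolution $A(X)*A(X)$ has generating function $a(t)^2 = \frac{t^2 f(t)^2}{(1 - tf(t))^2}$, and its constant coefficient $A_0(X)^2$ vanishes. The action of $\sigma$ on such a sequence therefore reduces to a clean division by $t$, yielding $\frac{t f(t)^2}{(1 - tf(t))^2}$; by Theorem~\ref{t8} this is the generating function of $B(X)$, as claimed. The only point demanding care is the bookkeeping around the $A_0(X) = 0$ convention --- this is what accounts for the extra factor of $t$ in $a(t)$ and simultaneously ensures that $\sigma$ acts as simple division by $t$ rather than requiring a subtraction of a nonzero constant. Beyond that, the derivation is a short manipulation of rational generating functions with no substantive obstacle.
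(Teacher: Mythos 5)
Your argument stands or falls on the claim that $A_0(X)=0$, and that claim is false: the paper fixes $A_0(X)=1$ in the Remark following Definition 1 (the empty composition is the unique composition of $0$), and the phrase ``the sequence $(A_n)_{n=0}^\infty$ starting from $0$'' in Section 2 refers to the starting \emph{index}, not to the value of the first term. Theorem \ref{t8} itself requires $A_0(X)=1$: its proof assembles a tiling of $M_{n+1}(X)$ as a composition of $k$, then the black square, then a composition of $n-k$, and the boundary terms $k=0$ and $k=n$ (black square at an end) must each count; under your convention the convolution would give $B_1(X)=A_0(X)^2=0$, contradicting $B_1(X)=1$ from Theorem \ref{t4}. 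With the correct normalization the generating function of $A(X)=(1,A_1,A_2,\ldots)$ is $1+t\cdot\frac{f(t)}{1-tf(t)}=\frac{1}{1-tf(t)}$, not your $\frac{tf(t)}{1-tf(t)}$. A second slip is the direction of the shift: what the proof of Theorem \ref{t8} actually establishes is $B_{n+1}(X)=\sum_{k=0}^{n}A_{n-k}(X)A_k(X)$ together with $B_0(X)=0$, so on generating functions the shift is multiplication by $t$, not the division by $t$ that the literal reading of Definition \ref{sigma} suggests. Carrying out your strategy with these corrections gives
$$\sum_{n\ge0}B_n(X)t^n=t\left(\frac{1}{1-tf(t)}\right)^{2}=\frac{t}{\bigl(1-tf(t)\bigr)^{2}}.$$

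This corrected outcome does not match the formula in the statement, and a direct check shows the printed formula cannot hold in general: for $X=(2,0,0,\ldots)$ one has $f(t)=2$, $A(X)=(1,2,4,8,\ldots)$ and $B(X)=(0,1,4,12,32,\ldots)$ with generating function $\frac{t}{(1-2t)^2}$, whereas $\frac{tf(t)^2}{(1-tf(t))^2}=\frac{4t}{(1-2t)^2}$. The source of the discrepancy is an index confusion in the paper's own proof, which correctly reduces everything to $t[A(X)(t)]^2$ with $A(X)(t)=\sum_{n\ge0}A_n(X)t^n=\frac{1}{1-tf(t)}$, but then evaluates $A(X)(t)$ via the definition of the Invert operator as $\frac{f(t)}{1-tf(t)}$, i.e., as the generating function of $(A_1,A_2,\ldots)$ without the leading $A_0=1$. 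Your proposal reaches the printed formula only because your misreading of $A_0(X)$, combined with the reversed shift, happens to compensate for that slip; as a derivation it does not go through, and the honest conclusion of your (otherwise sensible) generating-function strategy is $\frac{t}{(1-tf(t))^{2}}$.
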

\begin{proof} This clearly follows observing that by Theorem \ref{t4} and Theorem \ref{t8}
$$\sum_{n\geq0}B_n(X)t^n=\sum_{n\geq0}B_{n+1
}(X)t^{n+1}=t\sum_{n\geq0}\left( \sum_{k=0}^n A_{n-k}(X)A_k(X)\right)t^n=t[A(X)(t)]^2$$ where $ A(X)(t)$ is the generating function of $A(X)=I(X)$. Now, thanks to Definition 2, the claim is straightforward.
\end{proof}
\begin{corollary} \label{parts} Given a coloration $X$, we have
\begin{equation}\label{partscount} P(X)=A(X)*A(X)-A(X). \end{equation}
\end{corollary}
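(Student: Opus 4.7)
The plan is to combine the corollary immediately following Theorem \ref{t4} with the content of Theorem \ref{t8} (really, the identity established inside its proof) and read off the equality termwise.

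First, I would recall that the corollary to Theorem \ref{t4} gives $B_n(X) = P_{n-1}(X) + A_{n-1}(X)$ for all $n \geq 1$, since $R_{n-1}(X) = P_{n-1}(X) + A_{n-1}(X)$. Shifting the index, this reads
\begin{equation*}
B_{n+1}(X) = P_n(X) + A_n(X), \qquad n \geq 0.
\end{equation*}
Next, I would invoke the identity obtained in the proof of Theorem \ref{t8}, namely
\begin{equation*}
B_{n+1}(X) = \sum_{k=0}^{n} A_{n-k}(X) A_k(X) = (A(X) * A(X))_n,
\end{equation*}
which is just the termwise form of $B(X) = \sigma(A(X)*A(X))$. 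Equating the two expressions for $B_{n+1}(X)$ and solving for $P_n(X)$ yields $P_n(X) = (A(X)*A(X))_n - A_n(X)$, which is (\ref{partscount}) read component by component.

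I would also include a brief sanity check at $n=0$: the empty composition has no parts, so $P_0(X) = 0$, while $(A(X)*A(X))_0 - A_0(X) = A_0(X)^2 - A_0(X) = 1 - 1 = 0$, consistent with our convention $A_0(X) = 1$. There is no real obstacle here: the work has already been done in Theorems \ref{t4} and \ref{t8}, and the corollary is just the algebraic consequence of subtracting $A(X)$ from both sides of $A(X)*A(X) = P(X) + A(X)$.
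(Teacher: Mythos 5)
Your proof is correct and follows exactly the paper's route: the paper likewise derives the identity by combining $B_n(X)=P_{n-1}(X)+A_{n-1}(X)$ (the corollary to Theorem \ref{t4}) with the convolution formula for $B_{n+1}(X)$ from Theorem \ref{t8}. You have simply spelled out the termwise cancellation and added a harmless sanity check at $n=0$.
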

\begin{proof}
Since $B_n(X)=P_{n-1}(X)+A_{n-1}(X)$, the claim is an immediate consequence of the previous theorem.
\end{proof}
Eq. \ref{partscount} is a powerful tool in order to find the number of parts of black--tie colored compositions. First of all, for all  colorations $X$, we are always able to count, in an easy and fast way, the number of such compositions of any integer by means of the Invert operator through Eq. (\ref{inva}). Then, we can count the number of parts of such compositions of any integer by means of the convolution product of sequences.
\begin{example}
Let us consider the coloration $X=(1,1,0,0,0,0,\ldots)$, which results in $A_n(X)=F_{n+1}$. Since $A_0(X)=1$, we have
$$A*A=(1,2,5,10,20,38,\ldots)$$
which is the sequence A001629 starting from 1. Thus, by Corollary \ref{parts} we have that the number of parts of these colored compositions is given by the sequence 
$$A*A-A=(0, 1, 3, 7, 15, 30, 58, 109, 201, 365,\ldots)$$
which is $\sigma(A023610)$. For example the number of parts of colored compositions of 5 is 30:
$$11111\quad 1112\quad 1121 \quad 1211 \quad 2111 \quad 221 \quad 212 \quad 122$$
\end{example}

%\begin{remark}
%By Corollary \ref{parts}, it follows that $P_0(X)=0$, for any coloration $X$ since we always have $A_0(X)=1$, i.e., the colored compositions of 0 have no parts.
%\end{remark}

\begin{example}
Let us consider the coloration with Catalan numbers as in Example 1, $X=(1,1,2,5,14,42,\ldots)$. Using Corollary \ref{parts}, the number of parts for these colored compositions is 
$$P(X)=(0,1, 3, 9, 28, 90, 297, 1001, 3432, 11934,\ldots)$$
which corresponds to the sequence A000245. 
\end{example}

\begin{example}
Starting from the coloration $X=(1,1,1,1,1,1,\ldots)$, we obtain $A(X)=(1, 1, 2, 4, 8, 16, 32, 64, 128, 256,\ldots)$, the number of compositions of $n$. We can immediately evaluate
$$A*A=(1, 2, 5, 12, 28, 64, 144, 320, 704,\ldots)$$
and the sequence of the number of parts is
$$P=A*A-A=(0, 1, 3, 8, 20, 48, 112, 256, 576, 1280,\ldots).$$
In \cite{Sloane}, the sequence $(1, 3, 8, 20, 48, 112, 256, 576, 1280 \ldots)$ is  A001792.
\end{example}
\section*{References}

\end{document}